\newtheorem{lemma}{Lemma}
\newtheorem{assumption}{Assumption}
\newtheorem{theorem}{Theorem}
\newtheorem{definition}{Definition}
\newtheorem{remark}{Remark}
\newcommand{\R}{{\mathbb R}}
\DeclareMathOperator*{\argmin}{arg\,min}
 \title{On Social Optima of Non-Cooperative Mean Field Games}
  \author{Sen Li, Wei Zhang, and Lin Zhao
% \thanks{This work was supported in part by the National Science Foundation under grant ECCS-1309569 and by the Laboratory Directed Research and Development funding under the Control of Complex Systems Initiative at the Pacific Northwest National Laboratory.}
 \thanks{S. Li, W. Zhang, and L. Zhao are with the Department of Electrical and Computer Engineering, Ohio State University, Columbus, OH 43210. Email: \{li.2886, zhang.491, zhao.833\}@osu. edu}
%  \thanks{J. Lian and K. Kalsi are with the Electricity Infrastructure Group, Pacific Northwest National Laboratory, Richland, WA 99354. Email: \{jianming.lian, karanjit.Kalsi\}@pnnl.gov}
}
\begin{document}
\maketitle
\begin{abstract} 
This paper studies the connections between mean-field games and the social welfare optimization problems.  We consider a mean field game in functional spaces with a large population of agents, each of which seeks to minimize an individual cost function. The cost functions of different agents are coupled through a mean field term that depends on the mean of the population states. 
We show that under some mild conditions any $\epsilon$-Nash equilibrium of the mean field game coincides with the optimal solution to a convex social welfare optimization problem. The results are proved based on a general formulation in the functional spaces	 and can be applied to a variety of mean field games studied in the literature. Our result also implies that the computation of the mean field equilibrium can be cast as a convex optimization problem, which can be efficiently solved by a decentralized primal dual algorithm. Numerical simulations are presented to demonstrate the effectiveness of the proposed approach.

%The key contribution of the paper is to connect the equilibrium of the mean field games to the solution of a social welfare optimization problem. In particular, we show that if the mean field coupling term and the social welfare satisfy certain relations, then any $\epsilon$-Nash equilibrium of the mean field game is the solution to a convex social welfare optimization problem. Based on this result, existing ways to compute the mean field equilibrium can be interpreted as certain primal-dual algorithms, and improved algorithms can be proposed to compute the $\epsilon$-Nash equilibrium of the mean field games by leveraging existing convex optimization methods. 

\end{abstract}
\section{Introduction}
The mean field games study the interactions among a large population of strategic agents, whose decision making is coupled through a mean field term that depends on the statistical information of the overall population \cite{lasry2007mean}, \cite{lasry2006jeux}, \cite{lasry2006jeux1}, \cite{huang2007large}, \cite{gueant2011mean}. When the population size is large, each individual agent has a negligible impact on the mean field term. This enables characterizing the game equilibrium via the interactions between the agent and the mean field, instead of focusing on detailed interactions among all the agents. This methodology provides realistic interpretation of the microscopic behaviors while maintaining mathematical tractability in the macroscopic level, enabling numerous applications in various areas, such as economics \cite{gueant2009mean}, \cite{lachapelle2010computation}, networks \cite{huang2003individual}, demand response \cite{couillet2012electrical}, \cite{bauso2014game}, \cite{ma2013decentralized}, among others.

For general mean field games, the strategic interactions among the agents can be captured by an equation system that couples a backward Hamilton-Jacobi-Bellman equation with a forward Fokker-Planck-Kolmogorov equation \cite{lasry2007mean}, \cite{gueant2011mean}. In the meanwhile, a closely related approach, termed the Nash Certainty Equivalence \cite{huang2007large}, \cite{huang2006large}, was independently proposed to characterize the Nash equilibrium of the mean field game.
These pioneering results attracted considerable research effort to work on various mean field game problems. For instance, a discrete-time deterministic mean field game was studied in \cite{rammatico2016decentralized}. The mean field equilibrium for a large population of constrained systems was derived as the fixed point of some mapping and a decentralized iterative algorithm was proposed to compute the equilibrium. A special class of the mean field game was studied in \cite{huang2010large}, \cite{bensoussan2015mean} and \cite{bensoussan2015mean2}, for which there exists a major (dominating) player that has significant influence on the mean field term and other agents. The corresponding equilibrium conditions were derived, and the linear quadratic case was investigated.  Most of the aforementioned results focus on the mean field game equilibrium without system-level objectives. Different from these works, the social optima problem was studied in \cite{huang2012social} and \cite{nourian2013nash}, where the coordinator designs a cooperative mean field game whose equilibrium asymptotically achieves social optimum as the population size goes to infinity. However, despite the accumulation of the vast literature, the study of social optima in non-cooperative mean field games is relatively scarce.

This paper studies the connections between mean field games and the social welfare optimization problem. We consider a mean field game in functional spaces with a large population of non-cooperative agents. Each agent seeks to minimize a cost functional coupled with other agents through a mean field term that depends on the average of the population states.  
The contributions of this paper include the following. First, the mean field game is formulated in functional spaces, which provides a unifying framework that includes a variety of deterministic and stochastic mean field games as special cases    \cite{huang2007large}, \cite{ma2013decentralized},  \cite{rammatico2016decentralized}, \cite{huang2010nce}.
Second, we derive a set of equations to characterize the $\epsilon$-Nash equilibrium of the general mean field game in the  functional space. Comparing with the existing literature \cite{huang2007large}, \cite{rammatico2016decentralized}, our mean field equations are more general as it allows for convex individual cost functions and Lipschitz continuous mean field coupling term, while existing literature mainly focuses on  linear quadratic problems with affine coupling term \cite{huang2007large}, \cite{rammatico2016decentralized}. Third, we show that if the mean field coupling term is increasing with respect to the aggregated states, then any mean field equilibrium is also the solution to a convex social welfare optimization problem. This result is related to the work in \cite{huang2012social} and \cite{nourian2013nash}, but they are different in the rationality assumptions of the agents: the authors in \cite{huang2012social} and \cite{nourian2013nash} consider a group of agents that are cooperative and seek socially optimal solutions, while our starting point is that these agents are non-cooperative and individually incentive driven. In addition, the solution method proposed in  \cite{huang2012social} and \cite{nourian2013nash} achieves social optima when the population size goes to infinity, while we focus on exact socially optimal solutions with a finite number of agents.

The aforementioned contributions have important theoretical and practical implications. First, they enable us to evaluate the social performance of the equilibrium of the mean field games. This is important since in many applications we care about the efficiency of the game equilibrium, and this problem has not been studied in existing mean field game literature. Second, they imply that computing the mean field equilibrium is equivalent to solving a convex social welfare optimization problem. Based on our result, we can compute the mean field equilibrium by considering the corresponding social welfare optimization problem, which can be efficiently solved using various convex optimization methods. 
Furthermore, based on our result, some existing ways to compute the mean field equilibrium \cite{ma2013decentralized},  \cite{rammatico2016decentralized}  can be interpreted as certain primal-dual algorithms, and improved  algorithms can be proposed to compute the mean field equilibrium with better convergence properties. These ideas are validated in numerical simulations.

The rest of the paper proceeds as follows. The mean field game and the social welfare optimization problem are formulated in Section II. The solution of the game is characterized in Section III, its connection to social optima is studied in Section IV, and a primal-dual algorithm is proposed to efficiently compute the mean field equilibrium in Section V. Section VI shows simulation results.

\section{Problem Formulation}
This section formulates the mean field game and the corresponding social welfare optimization problem. The game is defined in a functional space, which provides a unified formulation that includes a variety of deterministic and stochastic mean field games as special cases \cite{huang2007large}, \cite{ma2013decentralized},  \cite{rammatico2016decentralized}, \cite{huang2010nce}. The rest of this section presents the mathematical formulation of this problem.

\subsection{The Mean Field Game}
We consider a general mean field game in a functional space among $N$ agents. Each agent $i$ is associated with a state variable $x_i\in \mathcal{X}_i$, a control input $u_i\in \mathcal{U}_i$ and a noise input $\pi_i\in \Pi_i$.
The state space $\mathcal{X}_i$ is a subspace of some Hilbert space $\mathcal{X}$. For any $x,y\in \mathcal{X}$, we denote the inner product as $x\cdot y$ , and define the norm as $||x||=\sqrt{x \cdot x}$. In addition, we assume that the control space $\mathcal{U}_i$ is a Banach space, and $\pi_i$ is a random element in a measurable space $(\Pi_i,\mathcal{B}_i)$ with an underlying probability space $(\Omega, \mathcal{F}, P)$. In other words, $\pi_i$ is a measurable mapping with respect to $\mathcal{F}/\mathcal{B}_i$.
For each agent $i$, the state $x_i$ is determined by the control input and the noise input according to the following relation: 
\begin{equation}
\label{systemdynamics}
x_i=f_i(u_i,\pi_i), \quad x_i\in \mathcal{X}_i, u_i\in \mathcal{U}_i, \pi_i\in \Pi_i,   
\end{equation}
Let $\mathcal{Z}_i$ denote the $\sigma$-algebra on $\mathcal{X}_i$, then we impose the following assumptions on $\pi_i$ and $f_i(u_i,\pi_i)$:
\begin{assumption}
\label{assumptionf_i}
(i) for $\forall \pi_i\in\Pi_i, \pi_j\in\Pi_j$,  $\pi_i$ and $\pi_j$ are independent,
(ii) for $\forall \pi_i\in\Pi_i$, $f_i(\cdot,\pi_i)$ is an affine mapping,
(iii) for $\forall u_i\in \mathcal{U}_i$, $f_i(u_i,\pi_i(\cdot)):\Omega \rightarrow \mathcal{X}_i$  is a measurable mapping with respect to $\mathcal{F}/\mathcal{Z}_i$. 
\end{assumption}

Based on the above assumptions, $x_i$ is also a random element, and the expectation $\mathbb{E}x_i$ can be defined accordingly. 
Throughout the paper, we consider the state $x_i$ with uniformly bounded second moment, i.e., there exists $C\geq 0$ such that for all $i=1,\ldots,N$, we have $\mathbb{E} ||x_i||^2\leq C$. 
In this case, the admissible control set is the set of control inputs that satisfy this boundedness condition. We assume it to be convex.
\begin{assumption}
\label{assumption_convex}
The admissible control set, defined as $\bar{\mathcal{U}}_i=\{u_i\in \mathcal{U}_i \vert x_i=f_i(u_i,\pi_i), \mathbb{E} ||x_i||^2\leq C\}$, is a convex set. 
\end{assumption}

For each agent $i$, we introduce a cost functional of the system state and control input. The costs of different agents are coupled through a mean field term that depends on the average of the population state (or control), and we write it as follows:
\begin{equation}
\label{utilityfunction}
J_i(x_i,u_i,m)=V_i(x_i,u_i)+F(m)\cdot  x_i+G(m),
\end{equation}
where $m\in \mathcal{X}$ is the average of the population state, i.e., $m=\dfrac{1}{N}\sum_{i=1}^N x_i$, $F:\mathcal{X}\rightarrow \mathcal{X}$  is the mean field coupling term, and $G:\mathcal{X}\rightarrow\R$ is the cost associated with the mean field term. We impose the following regularity conditions on $J_i$:
\begin{assumption}
\label{assumption_cost}
(i) $V_i(x_i,u_i)$ is convex with respect to  $(x_i,u_i)$,
(ii)  $F(\cdot)$  is globally Lipschitz continuous with constant $L$ on $\mathcal{X}$, 
(iii)  $G(m)$ is Fr\'echet differentiable, and the derivative of $G(m)$ is bounded for any bounded $m$. 
\end{assumption}

The mean field game can be then formulated as follows:
\begin{align}
\label{individualoptimization}
&\min_{u_i} \mathbb{E} \left(V_i(x_i,u_i)+F(m)\cdot  x_i +G(m)\right) \\
&\text{s.t.} 
\begin{cases}
\label{abstractdynamic}
x_i=f_i(u_i, \pi_i) \\
m=\dfrac{1}{N}\sum_{i=1}^N x_i,  x_i\in \mathcal{X}_i,\quad u_i\in \bar{\mathcal{U}}_i.
\end{cases}
\end{align}

 There are several solution concepts for the game problem, such as Nash equilibrium, Bayesian Nash equilibrium, dominant strategy equilibrium, among others. In the context of mean field games, we usually relax the Nash equilibrium solution concept by assuming that each agent is indifferent to an arbitrarily small change $\epsilon$. This solution concept is referred to as the $\epsilon$-Nash equilibrium, formally defined as follows:
\begin{definition}
$(u_1^*,\ldots,u_N^*)$ is an $\epsilon$-Nash equilibrium of the game (\ref{individualoptimization}) if  the following inequality holds 
\begin{equation}
\mathbb{E} J_i(u_i^*,u_{-i}^*)\leq \mathbb{E} J_i(u_i,u_{-i}^*)+\epsilon
\end{equation}
for all $i=1,\ldots,N$, and all $u_i\in \bar{\mathcal{U}}_i$, where $u_{-i}=(u_1,\ldots,u_{i-1},u_{i+1},\ldots,u_N)$ and $J_i(u_i,u_{-i})$ is the compact notation for (\ref{utilityfunction}) after plugging (\ref{systemdynamics}) in (\ref{utilityfunction}).
\end{definition}

At an $\epsilon$-Nash equilibrium, each agent can lower his cost by at most $\epsilon$ via deviating from the equilibrium strategy, given that all other players follow the equilibrium strategy. 

\begin{remark}
The game problem (\ref{individualoptimization}) formulates a broad class of mean field games. As the problem is defined in the general functional space, it provides a unifying formulation that includes both discrete-time \cite{ma2013decentralized}, \cite{rammatico2016decentralized} and continuous-time system \cite{huang2007large} as special cases, and addresses both deterministic and stochastic cases. This class of problems frequently arises in various applications \cite{huang2007large}, \cite{gueant2011mean}, \cite{couillet2012electrical}, \cite{ma2013decentralized}, \cite{rammatico2016decentralized},   \cite{huang2012social},    \cite{huang2010nce},  \cite{kamgarpour2013bayesian},  \cite{mfgoil}, where $F(m)\cdot x_i$ can be either interpreted as the price multiplied by quantity \cite{lachapelle2010computation}, \cite{li2016on} or part of the quadratic penalty of the deviation of the system state from the population mean \cite{huang2007large}, \cite{huang2012social}. 
\end{remark}

\subsection{Examples}
This subsection presents two examples of mean field games. One is a discrete-time deterministic game \cite{rammatico2016decentralized} and the other is a continuous-time stochastic game \cite{huang2007large}. We show that these two examples are special cases of the formulated mean field game problem (\ref{individualoptimization}).

\subsubsection{Discrete-time deterministic game}
In \cite{rammatico2016decentralized} a deterministic mean field game is formulated in discrete-time as follows:
\begin{align}
\label{examplelqg1}
&\min_{\{u_i(t),t\geq 1\}} \sum_{t=1}^K \left(\left||x_i(t)-v(t) \right||^2+\left|| u_i(t)\right||^2 \right)\\
&\text{s.t.}
\begin{cases}
\label{linearsde1}
x_i(t)=\alpha_ix_i(t-1)+\beta_i u_i(t) \\
x_i(t)\in \mathcal{X}_i^t,\quad u_i(t)\in \bar{\mathcal{U}}_i^t, \quad i=1,\ldots,N,
\end{cases}
\end{align}
where $x_i(t)$ and $u_i(t)$ denote the state and control of agent $i$ at time $t$, and $v_i$ is the mean field term that satisfies $v(t)= \gamma \dfrac{1}{N} \sum_{i=1}^N x_i(t)+\eta $. To transform the above game problem to (\ref{individualoptimization}), we stack the control and state at different times to form the state and control trajectories, i.e., $x_i=\{x_i(t),t\in T\}$ and $u_i=\{u_i(t), t\in T\}$, where $T=\{1,\ldots,K\}$. Similarly, define $v=\{v(t),t\in T\}$.  Then we have $\mathcal{X}_i=\mathcal{X}_i^1\times \cdots \times \mathcal{X}_i^K$ and $\mathcal{U}_i=\mathcal{U}_i^1\times \cdots \times \mathcal{U}_i^K$, which are all Euclidean spaces. In this case, the discrete time game problem can be transformed to the following form:
\begin{align}
\label{gameform1}
&\min_{u_i} ||x_i||^2+||v||^2-2x_i\cdot v+||u_i||^2.\\
&\text{s.t.} 
\begin{cases}
\label{linearsde1}
x_{i}=C u_i +D \\
x_{i}\in \mathcal{X}_i,\quad u_{i}\in \bar{\mathcal{U}}_i, \quad i=1,\ldots,N,
\end{cases}
\end{align}
where $C$ is a matrix that depends on $\alpha_i$ and $\beta_i$, and $D$ is a constant vector. It is easy to verify that the game problem (\ref{gameform1}) satisfies Assumption \ref{assumptionf_i}-\ref{assumption_cost}. Therefore, (\ref{gameform1}) is a special case of our proposed mean field game (\ref{individualoptimization}).

\subsubsection{Continuous-time stochastic game}
The second example involves a linear quadratic Gaussian (LQG) control problem formulated in \cite{huang2007large}. The game problem can be written as follows:
\begin{align}
\label{examplelqg}
&\min_{\{u_i(t),t\geq 0\}} \mathbb{E} \int_0^\infty e^{-\rho t}\big[(x_i(t)-v(t))^2+ru_i(t)^2\big]dt \\
&\text{s.t.}
\begin{cases}
\label{linearsde}
dx_i(t)=(\alpha_ix_i(t)+\beta_i u_i(t))dt+\sigma_i dB_i(t) \\
x_i(t)\in \mathbb{R},\quad u_i(t)\in \mathbb{R}, \quad i=1,\ldots,N,
\end{cases}
\end{align}
where $x_i(t)$ and $u_i(t)$ denote the state and control for the $i$th agent at time $t$, $B_i(t)$ is a standard scalar Brownian motion, $v(t)=\gamma \dfrac{1}{N} \sum_{i=1}^N x_i(t)+\eta$ is the mean field term, and $\rho, r,\gamma, \eta>0$ are real constants. Let $x_i=\{x_i(t),t\geq 0\}$ and $u_i=\{u_i(t),t\geq 0\}$ denote the state and the control, respectively. The admissible control set is then defined as $\bar{\mathcal{U}}_i=\{u_i\in \mathcal{U}_i \vert u_i\in C[0,\infty),x_i=f_i(u_i,\pi_i), \mathbb{E} ||x_i||^2\leq C\}$, where $f_i$ can be defined based on the following relation:
\begin{align}
\label{f_ilqg}
x_i(t)=&x_i(0)e^{\alpha_i t}+\int_{0}^{t}\beta_i e^{\alpha_i(t-s)}u_i(t)dt \nonumber \\
&+\sigma_i\int_{0}^{t}e^{\alpha_i(t-s)}dB_i(s).
\end{align}
In this case, the state space satisfies $\mathcal{X}_i=\mathcal{X}$, which consists of all real-valued functions that are square integrable after multiplying a discounting factor, $\mathcal{X}=\{x|x(t)\in \mathbb{R}, \int_{[0,\infty)}e^{-\rho t}|x(t)|^2dt<\infty\}$.  For each $x_i$ and $x_j$ in $\mathcal{X}$, we define the inner product of $x_i$ and $x_j$ as follows:
\begin{equation}
x_i\cdot x_j=\int_{[0,\infty)} e^{-\rho t}x_i(t)x_j(t)dt,
\end{equation}
and the norm is defined as $||x_i||=\sqrt{x_i\cdot x_i}$. Note that there is an isometry mapping 
%\cite[p.44]{kolmogorov2012introductory}
 between the state space $\mathcal{X}$ and the $L^2$ space, and therefore, $\mathcal{X}$ is complete. This gives the following lemma:
\begin{lemma}
\label{hilbertspace}
The state space $\mathcal{X}$ is a Hilbert space.  
\end{lemma}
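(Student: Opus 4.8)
The plan is to exhibit an isometric isomorphism between $\mathcal{X}$ and the standard Hilbert space $L^2([0,\infty))$, and then transport completeness back along this map. First I would define $\Phi:\mathcal{X}\to L^2([0,\infty))$ by $(\Phi x)(t)=e^{-\rho t/2}x(t)$. A direct computation gives $\|\Phi x\|_{L^2}^2=\int_{[0,\infty)}e^{-\rho t}|x(t)|^2\,dt=\|x\|^2$, so $\Phi$ is well defined (it lands in $L^2$ precisely because of the finiteness condition defining $\mathcal{X}$) and norm preserving, and linearity is immediate. Surjectivity follows by noting that for any $g\in L^2([0,\infty))$ the function $x(t)=e^{\rho t/2}g(t)$ lies in $\mathcal{X}$ and satisfies $\Phi x=g$. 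Hence $\Phi$ is a linear bijection preserving norms, and by polarization it also preserves inner products, i.e. $\Phi x_i\cdot \Phi x_j=x_i\cdot x_j$, where the left-hand side is the usual $L^2$ pairing.

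Next I would verify that the bilinear form $x_i\cdot x_j$ introduced in the text is a genuine inner product on $\mathcal{X}$. Symmetry and bilinearity are clear; the integral defining $x_i\cdot x_j$ converges for all $x_i,x_j\in\mathcal{X}$ by the Cauchy--Schwarz inequality applied to $e^{-\rho t/2}x_i(t)$ and $e^{-\rho t/2}x_j(t)$ in $L^2$. Positive-definiteness holds once we adopt the standard convention, inherited from $L^2$, of identifying functions that agree almost everywhere, so that $x\cdot x=0$ forces $x=0$ in $\mathcal{X}$ (this same convention is what makes $\Phi$ injective).

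Finally, completeness transfers through $\Phi$: if $(x^{(n)})$ is Cauchy in $\mathcal{X}$, then $(\Phi x^{(n)})$ is Cauchy in $L^2([0,\infty))$, which is complete, so $\Phi x^{(n)}\to g$ for some $g\in L^2$; setting $x=\Phi^{-1}g$ and using that $\Phi^{-1}$ is also an isometry gives $\|x^{(n)}-x\|=\|\Phi x^{(n)}-g\|_{L^2}\to 0$. Thus $\mathcal{X}$ is a complete inner product space, hence a Hilbert space. I do not expect a genuine obstacle here: the only points needing care are the almost-everywhere identification required for positive-definiteness and injectivity of $\Phi$, and the routine Cauchy--Schwarz check that the inner-product integral is finite on all of $\mathcal{X}$; completeness itself comes for free from $L^2$.
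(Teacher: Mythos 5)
Your proof is correct and uses essentially the same argument as the paper: both establish an isometric isomorphism between $\mathcal{X}$ and $L^2([0,\infty))$ via the exponential weight $e^{\pm\rho t/2}$ (you go from $\mathcal{X}$ to $L^2$, the paper goes in the reverse direction) and then transfer completeness from $L^2$. Your write-up is somewhat more careful about the almost-everywhere identification and the explicit Cauchy-sequence transfer, but the underlying idea is identical.
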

\begin{proof}
Since $L^2$ space is complete, we show that $\mathcal{X}$ is isomorphic to $L^2$, and therefore it is also complete \cite[p.20]{conway2013course}. For this purpose, we define the following mapping: $g:L^2\rightarrow \mathcal{X}$ that satisfies $g(\cdot)=\{g_t(\cdot),t\in T\}$ and $g_t(l(t))=e^{\rho t/2}l(t)$ for any $l(t)\in L^2$. This is a linear surjective mapping and it can be verified that $g(l_1(t))\cdot g(l_2(t))=l_1(t)\cdot l_2(t)$, where the left-hand side inner product is defined on $\mathcal{X}$ and the right-hand side inner product is defined on the $L^2$ space. This indicates that $L^2$ and $\mathcal{X}$ are isomorphic, which completes the proof. 
\end{proof}

\begin{remark}
\label{completeness}
The completeness of the state space is important for future discussions. We will elaborate more on this point in later sections.
\end{remark}

Under the inner product defined above, the objective function of the problem (\ref{examplelqg}) can be transformed to the form of (\ref{individualoptimization}):
\begin{equation}
\label{transformedcont}
V_i(x_i,u_i)+F(m)\cdot x_i+G(m),
\end{equation}
where $V_i(x_i,u_i)=||x_i||^2+||u_i||^2$, $F(m)=-2m$ and $G(m)=||m||^2$.

It is easy to verify Assumption \ref{assumptionf_i} in the above example. To verify Assumption \ref{assumption_convex}, we note that $V_i(f_i(u_i,\pi_i),u_i)$ is convex with respect to $(f_i(u_i,\pi_i),u_i)$, and according to (\ref{f_ilqg}), $x_i=f_i(u_i,\pi_i)$ is an affine operator  with respect to $u_i$. This indicates that $V_i(f_i(u_i,\pi_i),u_i)$ is convex with respect to $u_i$. In addition,  according to $(\ref{transformedcont})$,   it is clear that $F(m)$ is Lipschitz continuous,  and $G(m)$ is Fr\'echet differentiable with bounded derivative for a bounded $m$. This validates Assumption \ref{assumption_convex}.

To verify Assumption \ref{assumption_cost}, we note that $x_i$ is an affine operator with respect to both $u_i$ and $\pi_i$, which conveniently leads to the convexity of $\bar{\mathcal{U}}_i$. This result can be summarized in the following lemma:
\begin{lemma}
\label{proofassumption2}
The admmissible control set $\bar{\mathcal{U}}_i$ for the game problem (\ref{examplelqg}) is convex. 
\end{lemma}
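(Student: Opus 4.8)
The plan is to show directly that the three defining conditions of
$\bar{\mathcal{U}}_i=\{u_i\in\mathcal{U}_i\mid u_i\in C[0,\infty),\ x_i=f_i(u_i,\pi_i),\ \mathbb{E}\|x_i\|^2\le C\}$
are preserved under convex combinations. Fix $u_i^1,u_i^2\in\bar{\mathcal{U}}_i$, a scalar $\lambda\in[0,1]$, and set $u_i^\lambda=\lambda u_i^1+(1-\lambda)u_i^2$; write $x_i^1=f_i(u_i^1,\pi_i)$, $x_i^2=f_i(u_i^2,\pi_i)$. I would then verify, in order: (i) $u_i^\lambda$ still lies in $\mathcal{U}_i\cap C[0,\infty)$; (ii) the associated state $x_i^\lambda=f_i(u_i^\lambda,\pi_i)$ equals $\lambda x_i^1+(1-\lambda)x_i^2$; and (iii) the second-moment bound $\mathbb{E}\|x_i^\lambda\|^2\le C$ holds. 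Together these give $u_i^\lambda\in\bar{\mathcal{U}}_i$, which is exactly convexity.

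Step (i) is immediate, since $\mathcal{U}_i$ is a Banach space, hence a vector space, and $C[0,\infty)$ is closed under linear combinations. Step (ii) is the one point that needs a moment's care: from the explicit representation (\ref{f_ilqg}), the map $u_i\mapsto f_i(u_i,\pi_i)$ is, for every realization of $\pi_i$, an affine operator in $u_i$ — the Brownian term enters additively and does not interact with the control — which is precisely Assumption~\ref{assumptionf_i}(ii). Since an affine map commutes with affine combinations and $\lambda+(1-\lambda)=1$, we obtain $x_i^\lambda=\lambda x_i^1+(1-\lambda)x_i^2$ almost surely. Step (iii) then combines convexity of $\|\cdot\|^2$ on the Hilbert space $\mathcal{X}$ with monotonicity and linearity of the expectation: pointwise in $\omega$, $\|x_i^\lambda\|^2=\|\lambda x_i^1+(1-\lambda)x_i^2\|^2\le\lambda\|x_i^1\|^2+(1-\lambda)\|x_i^2\|^2$, and taking expectations gives $\mathbb{E}\|x_i^\lambda\|^2\le\lambda\,\mathbb{E}\|x_i^1\|^2+(1-\lambda)\,\mathbb{E}\|x_i^2\|^2\le\lambda C+(1-\lambda)C=C$.

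Conceptually, the whole argument is the standard fact that a sublevel set of a convex functional is convex: here the relevant functional is $u_i\mapsto\mathbb{E}\|f_i(u_i,\pi_i)\|^2$, which is convex because it is the composition of the convex functional $\|\cdot\|^2$ with the affine map $u_i\mapsto f_i(u_i,\pi_i)$, post-composed with the linear operator $\mathbb{E}$. I do not expect a genuine obstacle; the only things worth writing out carefully are (a) the affineness (not mere linearity) of $f_i(\cdot,\pi_i)$ uniformly over the noise, read off from (\ref{f_ilqg}), and (b) checking that $\mathcal{U}_i$, $C[0,\infty)$, and the class of square-integrable states are vector spaces so that $u_i^\lambda$ and $x_i^\lambda$ are well defined before the moment bound is invoked. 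Everything else is routine.
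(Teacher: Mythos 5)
Your proof is correct and follows exactly the argument the paper sketches (the paper only remarks that $x_i=f_i(u_i,\pi_i)$ is affine in $u_i$ by (\ref{f_ilqg}) and that this "conveniently leads to" convexity of $\bar{\mathcal{U}}_i$, leaving the details implicit). Your write-up simply fills in those details — affineness preserving convex combinations of states, convexity of $\|\cdot\|^2$, and monotonicity of expectation — so there is nothing to change.
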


To summarize, the game problem in \cite{huang2007large} satisfies all the assumptions and can be viewed as a special case of the proposed mean field game (\ref{individualoptimization}).

\subsection{The Social Welfare}
The game equilibrium characterizes the outcomes of the interaction of agents with conflicting objectives. At the equilibrium solution, the conflicts are resolved and each agent achieves individual cost minimization. However, when it comes to evaluating the system-level performance, we run into another hierarchy of conflict: the conflict between the cost of individual agents and the system-level objective. For this problem, an important question is under what condition the game equilibrium is to the best benefit of the overall system, and is therefore desirable from the system level.  We interpret the system-level objective by the following social welfare optimization problem:
\begin{align}
\label{eq:socialwelfaremax}
&\min_{u_1,\ldots,u_N} \mathbb{E} \left(\sum_{i=1}^N V_i(x_i,u_i)+\phi(\mathbb{E} m)\right)\\
&\text{s.t.}
\begin{cases}
x_i=f_i(u_i, \pi_i) \\
x_i\in \mathcal{X}_i,\quad u_i\in \bar{\mathcal{U}}_i, \quad \forall i=1,\ldots,N
\end{cases}
\end{align}
where $\phi:\mathcal{X}\rightarrow \mathbb{R}$ represents the cost for the overall system to attain the average state. We assume it to be convex and Fr\'echet differentiable. 

In general, due to the self-interest seeking nature of the individual agents, the equilibrium of the mean field game (\ref{individualoptimization}) and the solution to the social welfare optimization problem (\ref{eq:socialwelfaremax}) are inherently inconsistent.  When the mean field equilibrium coincides with the solution to the social welfare optimization problem (\ref{eq:socialwelfaremax}), we say the game equilibrium is socially optimal. 
The objective of this paper is to study the $\epsilon$-Nash equilibrium of the mean field game (\ref{individualoptimization}), and investigate the conditions under which the game equilibrium is socially optimal.

\section{Characterizing The $\epsilon$-Nash Equilibrium}
In this section, we characterize the $\epsilon$-Nash equilibrium of the mean field game (\ref{individualoptimization}) with a set of mean field equations. Our derivation follows similar ideas in \cite{huang2007large} and \cite{rammatico2016decentralized}. However, as we consider the mean field game in general functional spaces, our result applies to more general cases than linear quadratic problems in \cite{huang2007large} and \cite{rammatico2016decentralized}. More comparisons can be found in Remark \ref{comparison}.

To study mean field equilibrium, we note that the cost function (\ref{utilityfunction}) of the individual agent is only coupled through the mean field term $F(m)$ and $G(m)$, and the impact of the control input for a single agent on the coupling term vanishes as the population size goes to infinity. Therefore, in the large population case we can approximate the agent's behavior with the optimal response to a deterministic value $y\in \mathcal{X}$ that replaces the mean field term $F(m)$ in the cost function (\ref{utilityfunction}). For this purpose, we define the following optimal response problem:
\begin{align}
\label{optimalresponse}
\mu_i(y)=&\argmin_{u_i} \mathbb{E} \left(V_i(x_i,u_i)+y\cdot  x_i \right) \\
&\text{s.t.} 
\begin{cases}
x_i=f_i(u_i, \pi_i) \\
x_i\in \mathcal{X}_i,\quad u_i\in \bar{\mathcal{U}}_i,
\end{cases}
\end{align}
where $\mu_i(y)$ denotes the optimal solution to the optimization problem (\ref{optimalresponse}) parameterized by $y$, and $G(m)$ is regarded as a constant in (\ref{optimalresponse}) that can be ignored.  Based on this approximation,  $y$  generates a collection of agent responses. Ideally, the deterministic mean field term approximation $y$ guides the individual agents to choose a collection of optimal responses $\mu_i(y)$ which, in return, collectively generate the mean field term that is close to the approximation $y$. This suggests that we use the following equation systems to characterize the equilibrium of the mean field game:
\begin{numcases}{}
   \mu_i(y)=\argmin_{u_i\in \bar{\mathcal{U}}_i} \mathbb{E} \left(V_i(x_i,u_i)+y\cdot  x_i \right)      \label{equationsystem1}
   \\
   x_i^*=f_i(\mu_i(y), \pi_i)  \label{equationsystem2} \\
   y=F\left(\dfrac{1}{N}\mathbb{E}\sum_{i=1}^N x_i^*\right),  \label{equationsystem3}
\end{numcases}
where the mean field term $y$ induces a collection of responses that generate $y$. In the rest of this subsection we show that the solution to this equation system is an $\epsilon$-Nash equilibrium of the mean field game (\ref{individualoptimization}). For this purpose, we first prove the following lemmas:
\begin{lemma}
\label{lemma1}
Under Assumption \ref{assumptionf_i}(i), if there exists $C>0$ such that $\mathbb{E}||x_i||^2\leq C$ for all $i=1,\ldots,N$, and $F(\cdot)$ is globally Lipschitz continuous, then the following relation holds for each agent $i$:
\begin{equation}
\label{prooflemma}
\big|\mathbb{E} \big( F(m)\cdot x_i\big)-F(\mathbb{E}m)\cdot \mathbb{E} x_i \big|\leq \epsilon_{N},
\end{equation}
where $m=\dfrac{1}{N}\sum_{i=1}^N x_i$ and $\lim_{N\rightarrow \infty} \epsilon_{N}=0$. 
\end{lemma}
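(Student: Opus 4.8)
The plan is to collapse the left-hand side to a single expectation and then estimate it by a chain of Cauchy--Schwarz inequalities, the Lipschitz bound on $F$, and the pairwise independence of the states.

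First I would note that $F(\mathbb{E}m)$ is a \emph{deterministic} element of $\mathcal{X}$, so $\mathbb{E}\big(F(\mathbb{E}m)\cdot x_i\big)=F(\mathbb{E}m)\cdot\mathbb{E}x_i$; subtracting this inside the expectation gives
\[
\mathbb{E}\big(F(m)\cdot x_i\big)-F(\mathbb{E}m)\cdot\mathbb{E}x_i=\mathbb{E}\Big(\big(F(m)-F(\mathbb{E}m)\big)\cdot x_i\Big).
\]
Applying the Cauchy--Schwarz inequality for the inner product on $\mathcal{X}$ pointwise and then the Cauchy--Schwarz inequality for expectations, and using $\mathbb{E}||x_i||^2\le C$, this quantity is bounded in absolute value by $\sqrt{C}\,\sqrt{\mathbb{E}||F(m)-F(\mathbb{E}m)||^2}$.

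Next I would invoke the global Lipschitz continuity of $F$ with constant $L$ to get $||F(m)-F(\mathbb{E}m)||\le L||m-\mathbb{E}m||$, hence $\mathbb{E}||F(m)-F(\mathbb{E}m)||^2\le L^2\,\mathbb{E}||m-\mathbb{E}m||^2$. Writing $m-\mathbb{E}m=\tfrac1N\sum_{j=1}^N(x_j-\mathbb{E}x_j)$ and expanding the squared norm,
\[
\mathbb{E}||m-\mathbb{E}m||^2=\frac1{N^2}\sum_{j=1}^N\sum_{k=1}^N\mathbb{E}\Big(\big(x_j-\mathbb{E}x_j\big)\cdot\big(x_k-\mathbb{E}x_k\big)\Big).
\]
Here is the only step that needs care: for $j\ne k$ the states $x_j$ and $x_k$ are independent random elements (each is a measurable image of the noise $\pi_j$, respectively $\pi_k$, and these are independent by Assumption \ref{assumptionf_i}(i)), and a conditioning argument then shows $\mathbb{E}\big((x_j-\mathbb{E}x_j)\cdot(x_k-\mathbb{E}x_k)\big)=0$, so every off-diagonal term drops out. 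Each diagonal term satisfies $\mathbb{E}||x_j-\mathbb{E}x_j||^2=\mathbb{E}||x_j||^2-||\mathbb{E}x_j||^2\le C$, so $\mathbb{E}||m-\mathbb{E}m||^2\le C/N$. Combining the three estimates yields
\[
\big|\mathbb{E}\big(F(m)\cdot x_i\big)-F(\mathbb{E}m)\cdot\mathbb{E}x_i\big|\le\frac{LC}{\sqrt N}=:\epsilon_N\longrightarrow 0 .
\]

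The genuine obstacle is justifying that the off-diagonal terms vanish for Hilbert-space-valued random elements: one must verify that $\omega\mapsto\big(x_j(\omega)-\mathbb{E}x_j\big)\cdot\big(x_k(\omega)-\mathbb{E}x_k\big)$ is integrable (it is, by Cauchy--Schwarz and the uniform $L^2$ bound) and that the identity $\mathbb{E}(X\cdot Y\mid Y)=(\mathbb{E}X)\cdot Y$ holds for independent, square-integrable $\mathcal{X}$-valued $X,Y$. If one prefers to avoid conditional expectations in an abstract Hilbert space, the same conclusion follows by fixing an orthonormal basis of $\mathcal{X}$, applying the familiar scalar ``independence kills covariance'' identity coordinatewise, and summing back up via monotone convergence. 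Everything else in the argument is a routine Cauchy--Schwarz/Lipschitz chain.
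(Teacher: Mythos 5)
Your proof is correct, but it takes a genuinely different and more economical route than the paper's. The paper splits the error as $I_1+I_2$ with $I_1=\mathbb{E}(F(m)\cdot x_i)-\mathbb{E}F(m)\cdot\mathbb{E}x_i$ and $I_2=\mathbb{E}F(m)\cdot\mathbb{E}x_i-F(\mathbb{E}m)\cdot\mathbb{E}x_i$, then further splits $I_1$ via the leave-one-out mean $m_{-i}=\frac{1}{N}\sum_{j\neq i}x_j$ so that independence of $x_i$ from $m_{-i}$ can be exploited directly, and finally handles $I_2$ by invoking the $L_2$ weak law of large numbers to get $\mathbb{E}\Vert m-\mathbb{E}m\Vert\to 0$. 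You instead collapse everything into the single term $\mathbb{E}\bigl(\bigl(F(m)-F(\mathbb{E}m)\bigr)\cdot x_i\bigr)$ and run one Cauchy--Schwarz/Lipschitz chain down to $\mathbb{E}\Vert m-\mathbb{E}m\Vert^2\le C/N$, where independence enters only through the vanishing of the off-diagonal covariance terms. This buys two things: the argument is shorter (no $m_{-i}$ bookkeeping, no separate appeal to a law of large numbers), and it produces an explicit rate $\epsilon_N=LC/\sqrt{N}$, which is stronger than the paper's qualitative $\epsilon_N\to 0$ (and in fact sharper than Remark~\ref{comparison} suggests is available under mere Lipschitz continuity; the paper's own $I_2$ bound could be quantified the same way via $\mathbb{E}\Vert m-\mathbb{E}m\Vert\le\sqrt{C/N}$). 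Two small points of hygiene: the independence of $x_j$ and $x_k$ for $j\neq k$ rests on the controls being deterministic so that $x_j$ is a measurable image of $\pi_j$ alone --- the paper tacitly uses the same fact, so you are on equal footing --- and in your basis-expansion fallback the cross terms are not sign-definite, so the interchange of sum and expectation should be justified by dominated convergence (or Tonelli on absolute values followed by Fubini) rather than monotone convergence; the required domination by $\Vert u\Vert\,\Vert v\Vert$ is exactly the Cauchy--Schwarz bound you already have.
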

\vspace{0.2cm}
\begin{proof}
%With slight abuse of notation, we denote $\epsilon_N$ as any sequence that converges to 0 as $N$ goes to infinity, i.e, $\lim_{N\rightarrow \infty} \epsilon_N=0$. Using this convention, any finite sum of sequences that converge to 0 is still a sequence converging to 0, i.e. $\epsilon_N+\epsilon_N=\epsilon_N$. 
To prove this result, it is clear that we have:
\begin{align*}
\big|\mathbb{E} \big(F(m)\cdot x_i\big)-&F(\mathbb{E}m)\cdot \mathbb{E} x_i \big|=\big| I_1+I_2 \big| \leq \nonumber\\ 
&\big| I_1\big| +\big| I_2\big|,
\end{align*}
where we define $I_1$ as $I_1=\mathbb{E} \big(F(m)\cdot x_i\big)-\mathbb{E} F(m)\cdot\mathbb{E} x_i$ and $I_2=\mathbb{E} F(m)\cdot\mathbb{E} x_i-F(\mathbb{E}m)\cdot \mathbb{E} x_i$. To show that $\big| I_1\big|$ is bounded by some $\epsilon$, we notice that:
\begin{align*}
\big| I_1\big|=\big| \mathbb{E} \big(F(m)\cdot x_i\big)-&\mathbb{E} F(m)\cdot\mathbb{E} x_i \big|=\big| I_3+I_4\big| \leq \nonumber \\
&\big| I_3\big| +\big| I_4\big|,
\end{align*}
where we define $I_3=\mathbb{E} \big(F(m)\cdot x_i\big)-\mathbb{E}\big(F(m_{-i})\cdot x_i\big)$, $I_4=\mathbb{E} \big(F(m_{-i})\cdot x_i\big)-\mathbb{E}\big(F(m)\cdot \mathbb{E}x_i\big)$ and $m_{-i}=\dfrac{1}{N}\sum_{j\neq i}x_j$.
Since $F(\cdot)$ is Lipschitz continuous with the constant $L\geq 0$, and the second moment of $x_i$ is bounded, we have:
\begin{align*}
\big| I_3\big|=\big| \mathbb{E}& \big(F(m)\cdot x_i\big)-\mathbb{E}\big(F(m_{-i})\cdot x_i\big) \big| \leq \nonumber\\
\mathbb{E}\big| F(m)\cdot x_i&-F(m_{-i})\cdot x_i\big| \leq \mathbb{E} \big(\big\Vert \dfrac{L}{N} x_i\big\Vert \big\Vert x_i  \big\Vert \big) = \nonumber \\
&\dfrac{L}{N}\mathbb{E}||x_i||^2 \leq \dfrac{LC}{N}. 
\end{align*}
In addition, as $x_i$ is independent with $m_{-i}$, we have 
\begin{align*}
\big| I_4\big|=\big| \mathbb{E} \big(F(m_{-i})\cdot x_i\big)-\mathbb{E}\big(F(m)\cdot \mathbb{E}x_i\big) \big| = \nonumber\\
\big|\mathbb{E} F(m_{-i})\cdot \mathbb{E}x_i-\mathbb{E}F(m)\cdot \mathbb{E}x_i \big| \leq \dfrac{L}{N}\big\Vert \mathbb{E}x_i\big\Vert^2. 
\end{align*}
Note that $\mathbb{E}||x_i||^2$ is bounded, and thus $\big\Vert\mathbb{E}x_i\big\Vert$ is also bounded:
\begin{equation*}
\big\Vert \mathbb{E}x_i\big\Vert\leq \mathbb{E} \big\Vert x_i\big\Vert =\mathbb{E} \sqrt{||x_i||^2} \leq \sqrt{\mathbb{E}||x_i||^2}\leq \sqrt{C}.
\end{equation*}
This indicates that $\big| I_4\big|\leq \dfrac{LC}{N}$.
 Therefore, $\big| I_1\big|\leq \big| I_3\big|+\big| I_4\big|\leq \dfrac{2LC}{N}$. To show that $\big| I_2\big|$ converges to $0$, we define a random variable $r_N=F(m)-F(\mathbb{E}m)$. 
Since $x_i$ and $x_j$ are independent and have bounded second moment,  by the $L_2$ weak law of large numbers,  $m$ converges to $\mathbb{E}m$ in $L_2$ as $N$ goes to infinity (can be easily proved in infinite-dimensional case). Therefore, $m$ converges to $\mathbb{E}m$ in $L_1$, i.e., $\lim_{N\rightarrow \infty}\mathbb{E}||m-\mathbb{E}m|| =0$. In addition, we have:
\begin{equation*}
||r_N||=||F(m)-F(\mathbb{E}m)||\leq L||m-\mathbb{E}m||.
\end{equation*}
Therefore, $\lim_{N\rightarrow \infty}\mathbb{E}||r_N||\leq \lim_{N\rightarrow \infty}\mathbb{E}L||m-\mathbb{E}m|| =0$
 %Since $F(\cdot)$ is continuous, we have $r_N$ converges to $0$ in $L_1$ and in probability. This indicates that $r_N$ is uniformly integrable and thus we have $\lim_{N\rightarrow \infty} \big\Vert \mathbb{E}Lr_N \big\Vert=0$. This convergence result can be equivalently represented as $\big\Vert \mathbb{E}F(m)-F(\mathbb{E}m)\big\Vert\leq \epsilon_N$ with $\lim_{N\rightarrow \infty}\epsilon_N =0$. 
This indicates that:
\begin{equation*}
I_2\leq \big\Vert \mathbb{E}F(m)-F(\mathbb{E}m)\big\Vert\big\Vert \mathbb{E}x_i \big\Vert\leq \sqrt{C}\epsilon_N.
\end{equation*} 
This completes the proof.
\end{proof}

\begin{lemma}
\label{lemma2}
Define $m=\dfrac{1}{N} \sum_{i=1}^Nx_i$, and let $m_{-i}=\dfrac{1}{N} \sum_{j\neq i} x_j$, where $x_i$ denotes the state trajectory corresponding to $u_i$ in $\bar{\mathcal{U}_i}$, then we have the following relation:
\begin{equation}
\big |\mathbb{E}G(m)-\mathbb{E}G(m_{-i})\big|\leq \epsilon_N
\end{equation}
\end{lemma}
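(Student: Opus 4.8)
The plan is to bound $G(m)-G(m_{-i})$ pathwise by a mean value argument along the segment joining $m_{-i}$ and $m$, then take expectations, exploiting the key observation that $m$ and $m_{-i}$ differ only by the single term $\frac{1}{N}x_i$.

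First I would note that $m-m_{-i}=\frac{1}{N}x_i$, so that for $t\in[0,1]$ the point $\xi_t:=m_{-i}+t(m-m_{-i})=(1-t)m_{-i}+t\,m$ is a convex combination of $m_{-i}$ and $m$. By Assumption \ref{assumption_cost}(iii), $G$ is Fr\'echet differentiable; identifying the derivative $DG(\xi)$ with an element of $\mathcal{X}$, the scalar map $g(t):=G(\xi_t)$ is differentiable with $g'(t)=DG(\xi_t)\cdot(m-m_{-i})$, and the mean value theorem gives, pathwise,
\[
\big|G(m)-G(m_{-i})\big|=|g(1)-g(0)|\le \sup_{t\in[0,1]}\|DG(\xi_t)\|\;\|m-m_{-i}\|=\tfrac{1}{N}\,\|x_i\|\,\sup_{t\in[0,1]}\|DG(\xi_t)\|.
\]

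Next I would control $\sup_t\|DG(\xi_t)\|$ with the remaining part of Assumption \ref{assumption_cost}(iii), that $DG$ is bounded on bounded subsets of $\mathcal{X}$. Since $\xi_t$ is a convex combination, $\|\xi_t\|\le\max(\|m\|,\|m_{-i}\|)\le\|m_{-i}\|+\frac{1}{N}\|x_i\|$, so writing $\kappa(r):=\sup_{\|\xi\|\le r}\|DG(\xi)\|$ (finite, nondecreasing) we get $\big|G(m)-G(m_{-i})\big|\le \frac{1}{N}\|x_i\|\,\kappa\big(\|m_{-i}\|+\tfrac{1}{N}\|x_i\|\big)$. Taking expectations and using $\big|\mathbb{E}G(m)-\mathbb{E}G(m_{-i})\big|\le\mathbb{E}\big|G(m)-G(m_{-i})\big|$, the lemma reduces to showing $\tfrac{1}{N}\mathbb{E}\big[\|x_i\|\,\kappa(\|m_{-i}\|+\tfrac{1}{N}\|x_i\|)\big]\to 0$.

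I expect this last step to be the main obstacle, since $m_{-i}$ is random and under the standing assumption has only a bounded second moment ($\mathbb{E}\|m_{-i}\|^2\le C$ and $\mathbb{E}\|m\|^2\le C$, by convexity of $\|\cdot\|^2$ and $\mathbb{E}\|x_j\|^2\le C$), hence need not be bounded almost surely, so $\kappa(\|m_{-i}\|+\cdots)$ need not be bounded. I would handle it by truncation: fix $R>0$ and split the expectation over $A_R:=\{\|m_{-i}\|+\frac{1}{N}\|x_i\|\le R\}$ and its complement. On $A_R$ the integrand is at most $\frac{\kappa(R)}{N}\|x_i\|$, contributing at most $\frac{\kappa(R)}{N}\mathbb{E}\|x_i\|\le\frac{\kappa(R)\sqrt{C}}{N}\to 0$; on $A_R^c$, Chebyshev gives $P(A_R^c)=O(1/R^2)$, and for the $G$'s relevant in this paper (quadratic type, so $\kappa$ at most linear) the tail term is handled by Cauchy--Schwarz, after which one sends $N\to\infty$ and then $R\to\infty$. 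In fact, when $DG$ has at most linear growth, $\kappa(r)\le a+br$, the truncation is unnecessary: $\tfrac{1}{N}\mathbb{E}\big[\|x_i\|(a+b\|m_{-i}\|+\tfrac{b}{N}\|x_i\|)\big]\le\tfrac{1}{N}\big(a\sqrt{C}+bC+\tfrac{b}{N}C\big)=O(1/N)$ directly by Cauchy--Schwarz --- the clean route for, e.g., $G(m)=\|m\|^2$ in the LQG example. Either way one obtains $\big|\mathbb{E}G(m)-\mathbb{E}G(m_{-i})\big|\le\epsilon_N$ with $\lim_{N\to\infty}\epsilon_N=0$.
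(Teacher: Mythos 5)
Your argument is sound and reaches the stated conclusion, but it takes a genuinely different route from the paper. The paper decomposes the difference through the \emph{deterministic} means, writing $\mathbb{E}G(m)\approx G(\mathbb{E}m)\approx G(\mathbb{E}m_{-i})\approx \mathbb{E}G(m_{-i})$: the middle step is a mean value bound on the segment joining $\mathbb{E}m$ and $\mathbb{E}m_{-i}$, which lies in the deterministic ball $\{\|\cdot\|\le\sqrt C\}$ so that Assumption \ref{assumption_cost}(iii) applies directly, while the two outer steps are law-of-large-numbers concentration estimates ``as in Lemma \ref{lemma1}.'' You instead apply the mean value theorem pathwise between the random points $m$ and $m_{-i}$, exploiting $\|m-m_{-i}\|=\frac{1}{N}\|x_i\|$, and then take expectations. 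Your route buys two things: it needs no law of large numbers at all for this lemma, and it yields an explicit $O(1/N)$ rate when $DG$ has linear growth. Its cost is exactly the obstacle you identify: the segment $\xi_t$ is random and unbounded, so the locally-bounded-derivative hypothesis does not directly control $\sup_t\|DG(\xi_t)\|$, and your truncation only closes under an additional growth condition on $\kappa$. It is worth noting that the paper's proof has the \emph{same} unaddressed issue in its outer steps: the ``same argument as in Lemma \ref{lemma1}'' bound on $|\mathbb{E}G(m)-G(\mathbb{E}m)|$ used global Lipschitz continuity of $F$, which $G$ is not assumed to satisfy; only the paper's middle (deterministic) step genuinely gets by with local boundedness of $DG$. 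So neither proof is fully rigorous under Assumption \ref{assumption_cost}(iii) alone; you at least make the needed strengthening (e.g., linear growth of $DG$, which holds for the quadratic $G$ in the examples) explicit rather than implicit.
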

\vspace{0.1cm}
\begin{proof}
For notation convenience, we use $\epsilon_{iN}$ to denote a sequence that converges to 0 for any fixed $i$ as $N$ goes to infinity, i.e., $\lim_{N\rightarrow\infty}\epsilon_{iN}=0$, where $i=1,2,\ldots$. To prove this lemma, we can show that  $|\mathbb{E}G(m)-G(\mathbb{E}m)|\leq \epsilon_{1N}$ and  $|\mathbb{E}G(m_{-i})-G(\mathbb{E}m_{-i})|\leq \epsilon_{2N}$ using the same argument as in the proof of Lemma \ref{lemma1}.  Then it suffices to show that $|G(\mathbb{E}m)-G(\mathbb{E}m_{-i})|\leq \epsilon_{3N}$. This is true since $||\mathbb{E}m||$ is bounded in a compact set, and  $G(\mathbb{E}m)$ has bounded derivative for any bounded $\mathbb{E}m$.    
\end{proof}

Using the result of Lemma \ref{lemma1} and Lemma \ref{lemma2}, we can show that the solution to  the equation system (\ref{equationsystem1})-(\ref{equationsystem3}) is an $\epsilon$-Nash equilibrium of the mean field game (\ref{individualoptimization}), which is summarized in the following theorem.
\begin{theorem}
\label{theorem1}
The solution to the equation system (\ref{equationsystem1})-(\ref{equationsystem3}), is an $\epsilon_N$-Nash equilibrium of the mean field game (\ref{individualoptimization}), and  $\lim_{N\rightarrow \infty} \epsilon_N=0$. 
\end{theorem}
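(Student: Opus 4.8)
The plan is to check the $\epsilon$-Nash inequality of the definition directly for the profile $u_i^*=\mu_i(y)$, with $x_i^*=f_i(\mu_i(y),\pi_i)$ and $m^*=\frac{1}{N}\sum_{i=1}^N x_i^*$, where (\ref{equationsystem3}) gives $y=F(\mathbb{E}m^*)$. I would fix an agent $i$ and an arbitrary admissible deviation $u_i\in\bar{\mathcal U}_i$, and write $\tilde x_i=f_i(u_i,\pi_i)$, $m_{-i}^*=\frac{1}{N}\sum_{j\neq i}x_j^*$, and $\tilde m=m_{-i}^*+\frac{1}{N}\tilde x_i$ for the population average when only agent $i$ deviates. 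Since $\mu_i(y)\in\bar{\mathcal U}_i$ for every $i$ and $u_i\in\bar{\mathcal U}_i$, every state trajectory appearing below has second moment at most $C$, so Lemmas~\ref{lemma1} and~\ref{lemma2} apply to both configurations $(x_i^*,x_{-i}^*)$ and $(\tilde x_i,x_{-i}^*)$.

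The core of the argument is that, up to an error vanishing as $N\to\infty$, the individual cost equals the ``linearized'' cost obtained by replacing the coupling $F(m)$ with the constant $y$ and discarding the agent-independent $G$ remainder. For the equilibrium configuration, Lemma~\ref{lemma1} yields $|\mathbb{E}(F(m^*)\cdot x_i^*)-F(\mathbb{E}m^*)\cdot\mathbb{E}x_i^*|\leq\epsilon_N$, and since $F(\mathbb{E}m^*)=y$ this gives $\mathbb{E}(F(m^*)\cdot x_i^*)=\mathbb{E}(y\cdot x_i^*)+O(\epsilon_N)$, while Lemma~\ref{lemma2} gives $\mathbb{E}G(m^*)=\mathbb{E}G(m_{-i}^*)+O(\epsilon_N)$. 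For the deviation configuration, Lemma~\ref{lemma1} applied to $(\tilde x_i,x_{-i}^*)$ gives $\mathbb{E}(F(\tilde m)\cdot\tilde x_i)=F(\mathbb{E}\tilde m)\cdot\mathbb{E}\tilde x_i+O(\epsilon_N)$; because $\tilde m$ and $m^*$ differ in only one of $N$ summands, $\|\mathbb{E}\tilde m-\mathbb{E}m^*\|\leq\frac{2\sqrt{C}}{N}$, so by Lipschitz continuity $\|F(\mathbb{E}\tilde m)-y\|\leq\frac{2L\sqrt{C}}{N}$, and together with $\|\mathbb{E}\tilde x_i\|\leq\sqrt{C}$ this again gives $\mathbb{E}(F(\tilde m)\cdot\tilde x_i)=\mathbb{E}(y\cdot\tilde x_i)+O(\epsilon_N)$; and Lemma~\ref{lemma2} applied to $(\tilde x_i,x_{-i}^*)$ gives $\mathbb{E}G(\tilde m)=\mathbb{E}G(m_{-i}^*)+O(\epsilon_N)$, with the \emph{same} anchor $\mathbb{E}G(m_{-i}^*)$ as before, since the other agents have not moved.

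Assembling these estimates, I expect the chain
\begin{align*}
\mathbb{E}J_i(u_i^*,u_{-i}^*)
&=\mathbb{E}\bigl(V_i(x_i^*,u_i^*)+y\cdot x_i^*\bigr)+\mathbb{E}G(m_{-i}^*)+O(\epsilon_N)\\
&\leq\mathbb{E}\bigl(V_i(\tilde x_i,u_i)+y\cdot\tilde x_i\bigr)+\mathbb{E}G(m_{-i}^*)+O(\epsilon_N)\\
&=\mathbb{E}J_i(u_i,u_{-i}^*)+O(\epsilon_N),
\end{align*}
in which the middle inequality is precisely the defining optimality of $u_i^*=\mu_i(y)$ in the parameterized problem (\ref{equationsystem1}), and the outer equalities are the linearization estimates above. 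It then remains to observe that every error term collected here is a fixed multiple of $1/N$ or of an $L_1$ fluctuation $\mathbb{E}\|m-\mathbb{E}m\|$ of an average of $N$ or $N-1$ independent states of second moment at most $C$, which by the $L_2$ weak law of large numbers are all dominated by a single sequence $\epsilon_N\to0$ that is independent of $i$ and of the deviation $u_i$ --- exactly the uniformity the definition of an $\epsilon_N$-Nash equilibrium requires. I expect the main point requiring care to be this uniformity, together with the two bookkeeping moves of passing from $F(\mathbb{E}\tilde m)$ back to the fixed-point value $y$ and of choosing the common anchor $\mathbb{E}G(m_{-i}^*)$ so that the $G$-terms cancel; the genuine analytic content has already been packaged into Lemmas~\ref{lemma1} and~\ref{lemma2}.
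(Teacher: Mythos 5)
Your proposal is correct and follows essentially the same route as the paper's proof: apply Lemma~\ref{lemma1} and Lemma~\ref{lemma2} to both the equilibrium and the single-deviation configurations, use the Lipschitz continuity of $F$ together with $\|\mathbb{E}\tilde x_i\|\leq\sqrt{C}$ to replace $F(\mathbb{E}\tilde m)$ by the fixed-point value $y$ at an $O(1/N)$ cost, and close the argument with the optimality of $\mu_i(y)$ in (\ref{equationsystem1}). Your explicit attention to the uniformity of $\epsilon_N$ over $i$ and over the deviation $u_i$ is a point the paper leaves implicit, but it does not change the argument.
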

\begin{proof}
For notation convenience, we denote the solution to the equation system (\ref{equationsystem1})-(\ref{equationsystem3}) as $u_i^*$, $x_i^*$ and $y^*$, where $x_i^*$ is the state trajectory corresponding to $u_i^*$. 
%In addition, we use $\epsilon_{iN}$ to denote a sequence that converges to 0 for any fixed $i$ as $N$ goes to infinity, i.e., $\lim_{N\rightarrow\infty}\epsilon_{iN}=0$, where $i=1,2,\ldots$. 
To prove this theorem, we need to show that:
\begin{align*}
&\mathbb{E} \bigg( V_i(x_i^*,u_i^*)+F(m^*)\cdot x_i^*+G(m^*)\bigg) \leq \epsilon_N+\nonumber\\ \mathbb{E} \bigg(&V_i(x_i,u_i)+F\bigg(\dfrac{1}{N}x_i+m_{-i}^* \bigg)\cdot x_i+G\bigg(\dfrac{1}{N}x_i+m_{-i}^*\bigg)\bigg)
\end{align*}
for all $u_i\in \bar{\mathcal{U}}_i$, where $m^*=\dfrac{1}{N}\sum_{i=1}^N x_i^*$, $m_{-i}^*=\dfrac{1}{N}\sum_{j\neq i} x_j^*$, and $x_i$ is the state trajectory corresponding to $u_i$. 
Based on Lemma \ref{lemma1} and Lemma \ref{lemma2}, it suffices to show that:
\begin{align}
\label{provecondition}
&\mathbb{E} V_i(x_i^*,u_i^*)+F\bigg(\dfrac{1}{N}\mathbb{E}\sum_{i=1}^N x_i^*\bigg)\cdot \mathbb{E} x_i^* \leq \nonumber\\ \mathbb{E}V_i(x_i,&u_i)+F\bigg(\dfrac{1}{N}\mathbb{E}\big(x_i+\sum_{j\neq i}x_j^* \big)\bigg)\cdot \mathbb{E}x_i+\epsilon_{1N}.
\end{align}
Since $||\mathbb{E} x_i||$ is bounded (see proof for Lemma \ref{lemma1}) and $F(\cdot)$ is Lipschitz continuous with  constant $L\geq 0$, we have:
\begin{align}
\label{provecondition_temp}
\bigg|F\bigg(\dfrac{1}{N}\mathbb{E}(x_i+&\sum_{j\neq i}x_j^* )\bigg)\cdot \mathbb{E}x_i-F\bigg(\dfrac{1}{N}\mathbb{E}\sum_{i=1}^N x_i^*\bigg)\cdot \mathbb{E}x_i\bigg|\leq \nonumber \\
&\big\Vert \dfrac{1}{N}L(\mathbb{E}x_i-\mathbb{E}x_i^*)\big\Vert \big\Vert \mathbb{E}x_i \big\Vert=\epsilon_{2N}.
\end{align}
Therefore, combining (\ref{provecondition}) and (\ref{provecondition_temp}), it suffices to show that:
\begin{align*}
\mathbb{E} V_i(x_i^*,&u_i^*)+F\bigg(\dfrac{1}{N}\mathbb{E}\sum_{i=1}^N x_i^*\bigg)\cdot \mathbb{E} x_i^* \leq \nonumber\\ &\mathbb{E}V_i(x_i,u_i)+F\bigg(\dfrac{1}{N}\mathbb{E}\sum_{i=1}^N x_i^*\bigg)\cdot \mathbb{E}x_i+\epsilon_{3N}.
\end{align*}
Note that based on (\ref{equationsystem3}), $F\bigg(\dfrac{1}{N}\mathbb{E}\sum_{i=1}^N x_i^*\bigg)=y^*$, which is equivalent to:
\begin{equation*}
\mathbb{E} V_i(x_i^*,u_i^*)+y^*\cdot \mathbb{E} x_i^* \leq \mathbb{E}V_i(x_i,u_i)+y^*\cdot \mathbb{E}x_i+\epsilon_{3N}.
\end{equation*}
This obviously holds based on (\ref{equationsystem1}), which completes the proof.
\end{proof}

Theorem \ref{theorem1} indicates that each agent is motivated to follow the equilibrium strategy $u_i^*$ as deviating from this strategy can only decrease the individual cost by a negligible amount $\epsilon_N$. Furthermore, this $\epsilon_N$ can be arbitrarily small, if the population size is sufficiently large.

\begin{remark}
\label{comparison}
Our result in Theorem \ref{theorem1} generalizes existing works in (\cite{huang2007large} and \cite{rammatico2016decentralized}) from several perspectives. First, these works mainly focus on linear quadratic problem, while we consider a general mean field game formulated in the functional space. Therefore, our result applies to more general cases than quadratic individual costs. Furthermore, the mean field coupling term $F(\cdot)$ is assumed to be affine  in  \cite{huang2007large} and \cite{rammatico2016decentralized}, while we relax it to be Lipschitz continuous. In the affine case, the order of expectation and the function $F(\cdot)$ can be interchanged, which significantly  simplifies the proof of Lemma \ref{lemma1}. On the other hand, our generalization also comes at a cost: since $F(\cdot)$ is Lipschitz continuous, we can not derive the convergence rate of $\epsilon$ as in \cite{huang2007large}.
\end{remark}

\section{Connection to Social Welfare Optimization}
This section tries to connect the mean field game (\ref{individualoptimization}) to the social welfare optimization problem (\ref{eq:socialwelfaremax}). Since the social welfare (\ref{eq:socialwelfaremax}) involves the sum of individual cost functions, the key idea is to decompose the social welfare optimization problem into individual cost minimization problems. To this end, we note that while the first term of (\ref{eq:socialwelfaremax}) is separable over $u_i$, the second term $\phi(\cdot)$ couples the decisions of each agent and makes the decomposition difficult. To address this issue, we introduce an augmented decision variable $z$ to denote the average of the expectation of the population state, i.e., $z=\dfrac{1}{N}\mathbb{E} \sum_{i=1}^N x_i$. The social welfare maximization problem (\ref{eq:socialwelfaremax}) can be then transformed to the following form:.  
\begin{equation}  
\label{socialwelfaretransform}
\min_{u_1,\ldots,u_N,z} \mathbb{E} \left(\sum_{i=1}^N V_i(x_i,u_i)+\phi(z)\right)
\end{equation}
\begin{subnumcases}{s.t.}
\label{constraint2}
z=\dfrac{1}{N}\mathbb{E} \sum_{i=1}^N x_i \\
x_i=f_i(u_i,\pi_i), \forall i=1,\ldots,N \\
x_i\in \mathcal{X}_i,\quad u_i\in \bar{\mathcal{U}}_i,\quad \forall i=1,\ldots,N
\end{subnumcases}
It is obvious that this problem (\ref{socialwelfaretransform}) and the social welfare optimization problem (\ref{eq:socialwelfaremax}) are equivalent, but the advantage of this transformation is that the objective function is now separable over its decision variables $(u_1,\ldots,u_N,z)$, and dual decomposition can be applied to decompose the problem into individual cost minimization problems. This can be done by incorporating the constraint (\ref{constraint2}) in the Lagrangian of the objective function, and obtaining the following equation:
\begin{align}
\label{lagrangianequation}
L(u,z,\lambda)=& \sum_{i=1}^{N} \mathbb{E} V_i(x_i,u_i)+\phi(z) \nonumber \\
&+ \lambda \cdot \left(\mathbb{E} \sum_{i=1}^N x_i-Nz\right) 
%=&\sum_{i=1}^N L_i(u_i,\lambda)+L_0(z,\lambda),
\end{align}
where $u=(u_1,\ldots,u_N)$ and $x=(x_1,\ldots,x_N)$. Note that the completeness of $\mathcal{X}$ is important in this derivation. If $\mathcal{X}$ is not complete, then the inner product term in (\ref{lagrangianequation}) should be replaced by a linear operator. In this case, since the mean field equations have inner product term, they can not be connected to the socially optimal solution. This elaborates Remark \ref{completeness}.

 For notation convenience, we define $L_i(u_i,\lambda)=\mathbb{E} V_i(x_i,u_i)+\lambda\cdot \mathbb{E} x_i$ to be the Lagrangian for the $i$th agent, and denote $L_0(z,\lambda)=  \phi (z)-N\lambda\cdot z$ as the Lagrangian for a virtual agent (social planner), then the dual problem of (\ref{socialwelfaretransform}) can be written as follows:
\begin{equation}  
\label{dualprolem}
\max_{\lambda} \min_{u_1,\ldots, u_N,z} \sum_{i=1}^N L_i(u_i,\lambda)+L_0(z,\lambda),
\end{equation}
\begin{subnumcases}{s.t.}
\label{constraint3}
x_i=f_i(u_i, \pi_i), \forall i=1,\ldots,N \\
x_i\in \mathcal{X}_i,\quad u_i\in \bar{\mathcal{U}}_i, \forall i=1,\ldots,N
\end{subnumcases}
Since the dual problem is fully decomposible, we can decompose it into individual problems so as to connect the dual solution to the mean field equilibrium. Furthermore, if the duality gap of the social welfare optimization problem is zero, then the mean field equilibrium is socially optimal. The main result of this paper can be summarized as follows:

\begin{theorem}
\label{connectiontheorem1}
Let $\phi(\cdot)$ satisfies $F(z)=\dfrac{1}{N} \phi'(z)$ for $\forall z\in \mathcal{X}$, and assume that the Slater condition holds for the social welfare optimization problem (\ref{socialwelfaretransform}), i.e., $\bar{\mathcal{U}}_i$ has at least one interior point, then any solution to the mean field equations (\ref{equationsystem1})-(\ref{equationsystem3}) is also the solution to the social welfare optimization problem (\ref{socialwelfaretransform}), and vice versa. 
\end{theorem}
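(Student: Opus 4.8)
The plan is to exploit the convexity of the transformed social welfare problem (\ref{socialwelfaretransform}) together with strong duality, and to recognize that the mean field equations (\ref{equationsystem1})--(\ref{equationsystem3}) are precisely the saddle-point (Karush--Kuhn--Tucker) conditions of (\ref{socialwelfaretransform}), with the mean field term $y$ playing the role of the Lagrange multiplier $\lambda$ attached to the averaging constraint (\ref{constraint2}). Establishing the theorem then amounts to matching these two systems of conditions in both directions.

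First I would check that (\ref{socialwelfaretransform}) is a convex program. By Assumption \ref{assumptionf_i}(ii) the map $u_i\mapsto x_i=f_i(u_i,\pi_i)$ is affine, so by Assumption \ref{assumption_cost}(i) each $\mathbb{E}\,V_i(x_i,u_i)$ is convex in $u_i$; $\phi$ is convex and Fr\'echet differentiable by hypothesis; the averaging constraint (\ref{constraint2}) and the dynamics are affine; and each $\bar{\mathcal{U}}_i$ is convex by Assumption \ref{assumption_convex}. Since by hypothesis each $\bar{\mathcal{U}}_i$ has an interior point, a feasible point whose $u_i$-components lie in the interior exists, so the Slater condition holds and strong duality is in force: the optimal value of (\ref{socialwelfaretransform}) equals that of its dual (\ref{dualprolem}), the dual optimum is attained, and any primal optimum and dual optimum together form a saddle point $(u^\star,z^\star,\lambda^\star)$ of the Lagrangian (\ref{lagrangianequation}). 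At this point the Hilbert space structure of $\mathcal{X}$ and the Riesz representation theorem let me take $\lambda^\star\in\mathcal{X}$, so that the coupling genuinely appears as the inner product $\lambda^\star\cdot\big(\mathbb{E}\sum_i x_i-Nz\big)$, as emphasized in Remark \ref{completeness}.

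Next I would read off the saddle-point conditions. Since (\ref{lagrangianequation}) separates as $\sum_i L_i(u_i,\lambda^\star)+L_0(z,\lambda^\star)$, the inner minimization decouples: $u_i^\star$ minimizes $\mathbb{E}\big(V_i(x_i,u_i)+\lambda^\star\cdot x_i\big)$ over $u_i\in\bar{\mathcal{U}}_i$ subject to $x_i=f_i(u_i,\pi_i)$, which is exactly the optimal response problem (\ref{optimalresponse}) at $y=\lambda^\star$; hence $u_i^\star=\mu_i(\lambda^\star)$ and $x_i^\star=f_i(\mu_i(\lambda^\star),\pi_i)$, i.e. (\ref{equationsystem1})--(\ref{equationsystem2}) with $y=\lambda^\star$. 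Likewise $z^\star$ minimizes $L_0(z,\lambda^\star)=\phi(z)-N\lambda^\star\cdot z$ over $\mathcal{X}$; since $\phi$ is convex and differentiable this is equivalent to the stationarity relation $\phi'(z^\star)=N\lambda^\star$, and with the standing hypothesis $F(z)=\tfrac1N\phi'(z)$ this reads $\lambda^\star=F(z^\star)$. Finally primal feasibility gives $z^\star=\tfrac1N\mathbb{E}\sum_i x_i^\star$, so $\lambda^\star=F\big(\tfrac1N\mathbb{E}\sum_i x_i^\star\big)$, which is (\ref{equationsystem3}) with $y=\lambda^\star$. Thus $(u^\star,x^\star,\lambda^\star)$ solves the mean field equations, proving that every minimizer of (\ref{socialwelfaretransform}) --- equivalently of (\ref{eq:socialwelfaremax}) --- yields a solution of (\ref{equationsystem1})--(\ref{equationsystem3}).

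For the converse I would run the same chain in reverse, which needs only the sufficiency of the saddle-point conditions for a convex program. Given a solution $(u^\star,x^\star,y^\star)$ of (\ref{equationsystem1})--(\ref{equationsystem3}), set $\lambda^\star=y^\star$ and $z^\star=\tfrac1N\mathbb{E}\sum_i x_i^\star$. Then (\ref{equationsystem1}) says $u_i^\star$ minimizes $L_i(\cdot,\lambda^\star)$; (\ref{equationsystem3}) together with $F(z)=\tfrac1N\phi'(z)$ gives $\phi'(z^\star)=Ny^\star=N\lambda^\star$, so by convexity $z^\star$ minimizes $L_0(\cdot,\lambda^\star)$; and $(u^\star,z^\star)$ is feasible by construction, so complementary slackness for the single equality constraint is automatic. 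Hence $(u^\star,z^\star,\lambda^\star)$ is a saddle point of (\ref{lagrangianequation}), and therefore $(u^\star,z^\star)$ --- equivalently $u^\star$ --- is optimal for (\ref{socialwelfaretransform}). The step I expect to be the main obstacle is making the strong-duality argument rigorous in this infinite-dimensional setting: verifying that the Slater-type constraint qualification genuinely applies to the equality constraint (\ref{constraint2}), that the dual optimum is attained, and that the multiplier can be identified with an element of the Hilbert space $\mathcal{X}$ itself rather than of a larger dual space; handling the $\mathbb{E}x_i$ terms and the topology of the constraint space carefully here is precisely where completeness of $\mathcal{X}$ is essential.
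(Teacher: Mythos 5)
Your proposal is correct and follows essentially the same route as the paper's own proof: invoke Slater's condition and convexity to get strong duality for the transformed problem (\ref{socialwelfaretransform}), decompose the Lagrangian into $\sum_i L_i(u_i,\lambda)+L_0(z,\lambda)$, and identify the resulting optimality conditions (with $\lambda$ in the role of $y$, stationarity of $L_0$ giving $\lambda=F(z)$ via $F=\tfrac1N\phi'$, and primal feasibility giving $z=\tfrac1N\mathbb{E}\sum_i x_i$) with the mean field equations (\ref{equationsystem1})--(\ref{equationsystem3}). If anything, you are more explicit than the paper about the converse direction (sufficiency of the saddle-point conditions) and about the infinite-dimensional caveats the paper glosses over.
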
 
\begin{proof}
We first note that since the social welfare optimization problem is convex, and the Slater condition holds, then the duality gap between (\ref{socialwelfaretransform}) and (\ref{dualprolem}) is zero, and there exists a dual solution to (\ref{dualprolem}), denoted as $(\lambda^d,u_1^d,\ldots,u_N^d,z^d)$, such that primal constraints are satisfied \cite[p. 224]{luenberger1997optimization}, i.e., $z^d=\dfrac{1}{N}\mathbb{E}\sum_{i=1}^N x_i^d$. To prove the theorem, if suffices to show that there is a one-to-one correspondence between the solution to (\ref{dualprolem}) and the solution to the mean field equations (\ref{equationsystem1})-(\ref{equationsystem3}). Since $(\lambda^d,u_1^d,\ldots,u_N^d,z^d)$ is the optimal solution to (\ref{dualprolem}), the following relations holds:
\begin{align}
\label{dualcondition1}
u_i^d= &\argmin_{u_i} L_i(u_i,\lambda^d)  \\
&\text{s.t.} 
\begin{cases}
x_i=f_i(u_i,\pi_i) \\
x_i\in \mathcal{X}_i,\quad u_i\in \bar{\mathcal{U}}_i
\end{cases}
\end{align}
and we also have:
\begin{equation}
\label{dualcondition2}
z^d=\argmin_{z\in \mathcal{X}}  L_0(z,\lambda^d) 
\end{equation}
Based on the definition (\ref{optimalresponse}), the equation (\ref{dualcondition1}) is equivalent to $u_i^d=\mu_i(\lambda^d)$, which is the same as equation (\ref{equationsystem1}). As $\phi(\cdot)$ is convex, we take the derivative of the objective function in (\ref{dualcondition2}) to be 0 and then (\ref{dualcondition2}) is equivalent to $\lambda^d=\dfrac{1}{N}\phi(z^d)'=F(z^d)$, where the second equality is due to the assumption of this theorem.
Note that at optimal solution, we have $z^d=\dfrac{1}{N}\mathbb{E}\sum_{i=1}^N x_i^d$. Therefore, $\lambda^d=F\left(\dfrac{1}{N}\mathbb{E} (\sum_{i=1}^N   x_i^d )\right)$, which is equivalent to (\ref{equationsystem1}). This indicates that the optimality condition of the dual problem is equivalent to the mean field equations (\ref{equationsystem1})-(\ref{equationsystem3}). This completes the proof.
\end{proof}

Theorem \ref{connectiontheorem1} establishes the connection between the mean field equilibrium and the social welfare optimization problem. This connection is important in several perspectives. First, it enables to evaluate the system-level performance of the mean field game. In particular, when a mean field game is given, we know under what condition the game equilibrium is socially optimal. Second, our result indicates that computing the mean field equilibrium is essentially equivalent to solving a convex social welfare optimization problem. Therefore, we can compute the mean field equilibrium by considering the corresponding social welfare optimization problem, which can be efficiently solved by various convex optimization methods. 

%Note that since $\phi(\cdot)$ is assumed to be convex,  and $F(z)=\dfrac{1}{N} \phi'(z)$. For Theorem \ref{connectiontheorem1} to hold, $F(\cdot)$ needs to be increasing.  However, we point out that the result of this paper can be generalized to the non-convex case. Due to the limit of space, we will include the result in the journal version. 

\begin{remark}
\label{remarkcontrol}
In this paper, we formulate the mean field term $F(\cdot)$ to depend on the average of the population {\bf state}. However, the proposed method still works when the mean field is the average of the {\bf control} decisions. In this case, the individual cost (\ref{utilityfunction}) is defined as $J_i(x_i,u_i,F(m), G(m))=V_i(x_i,u_i)+F\bigg(\dfrac{1}{N}\sum_{i=1}^N u_i\bigg)\cdot u_i+G\bigg(\dfrac{1}{N}\sum_{i=1}^N u_i\bigg)$, and the constraint (\ref{constraint2}) is $z=\dfrac{1}{N}\sum_{i=1}^N u_i$, then the same result as Theorem \ref{connectiontheorem1} can be obtained by a similar approach.
\end{remark}

\begin{figure*}[bt]%
\begin{minipage}[b]{0.32\linewidth}
\centering
\includegraphics[width = 1\linewidth]{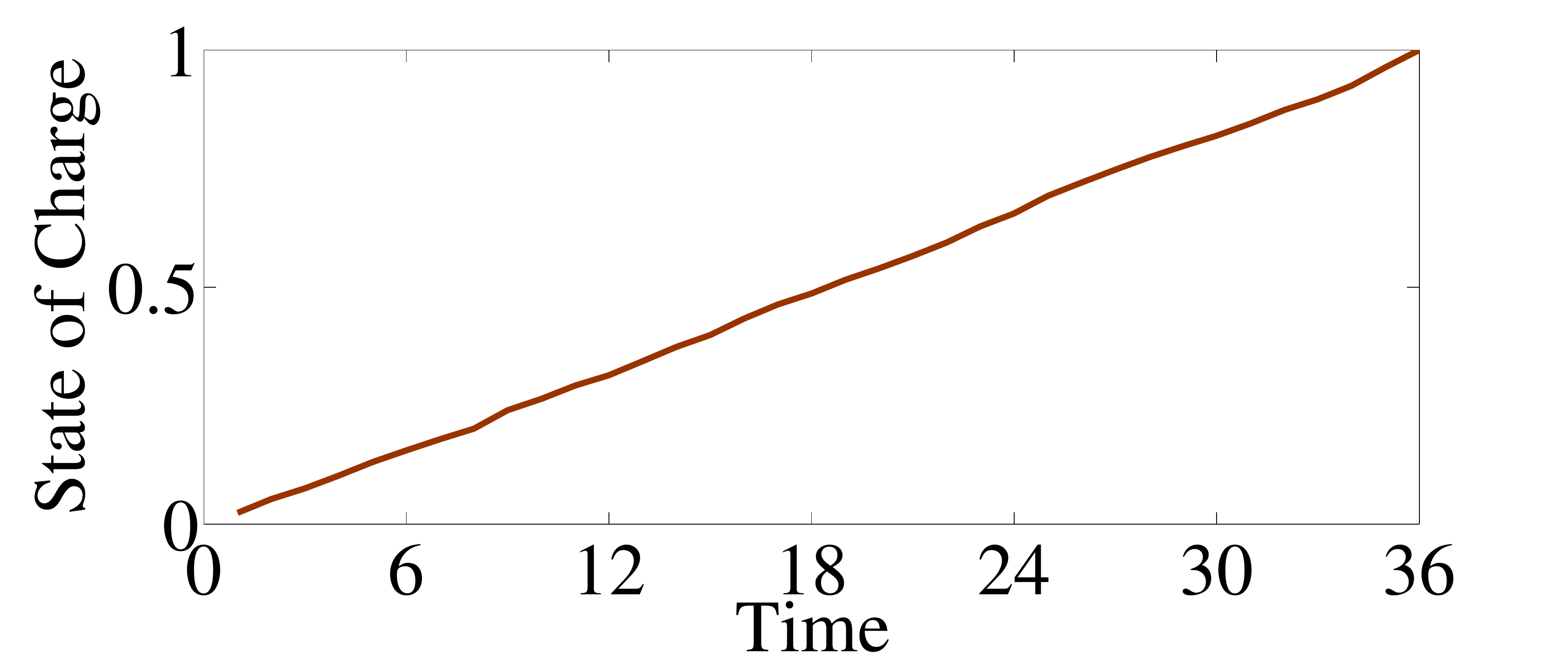}
\caption{The SOC of a randomly selected electric vehicle under ADMM.}
\label{fig1}
\end{minipage}
\begin{minipage}[b]{0.01\linewidth}
\hfill
\end{minipage}
\begin{minipage}[b]{0.32\linewidth}
\centering
\includegraphics[width = 1\linewidth]{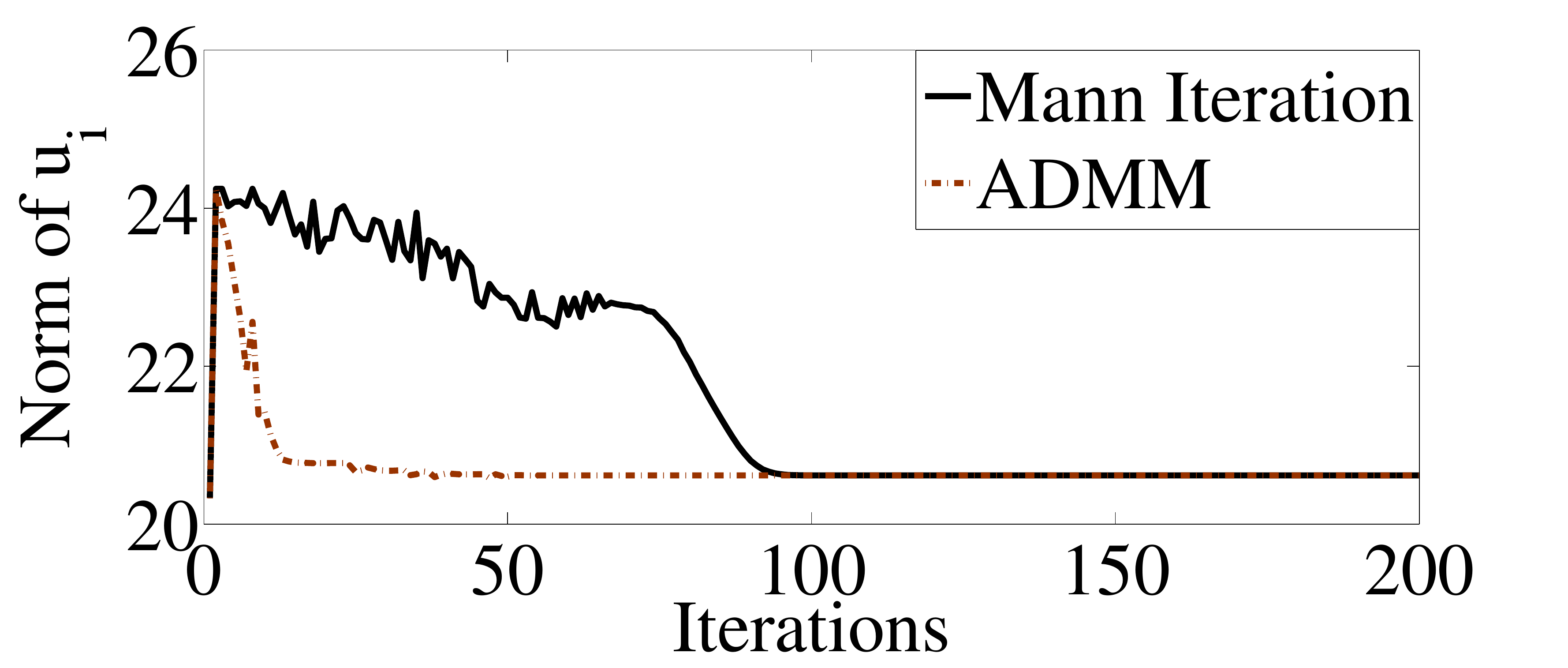}
\caption{The control decision of a randomly selected electric vehicle.}
\label{fig2}
\end{minipage}
\begin{minipage}[b]{0.01\linewidth}
\hfill
\end{minipage}
\begin{minipage}[b]{0.32\linewidth}
\centering
\includegraphics[width = 0.99\linewidth]{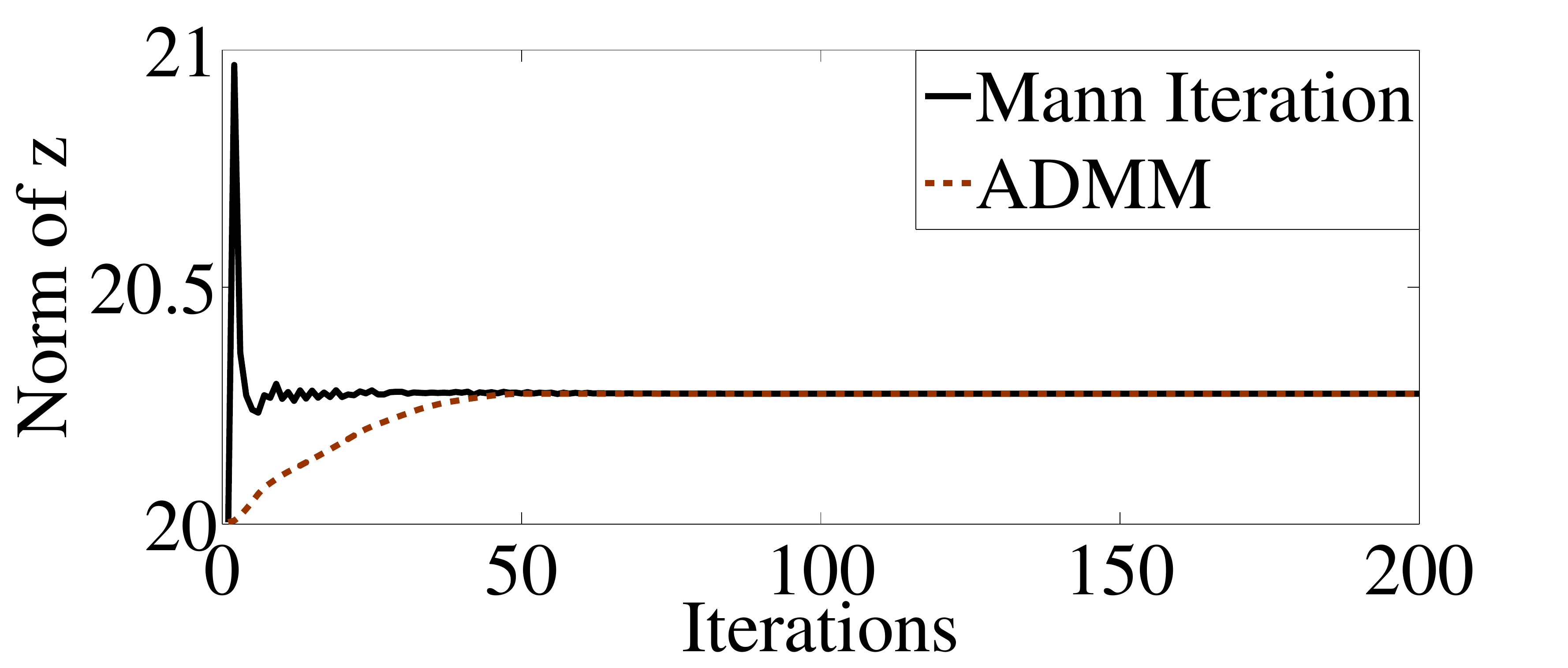}
\caption{The average control decision of the population in two algorithms.}
\label{fig3}
\end{minipage}
\end{figure*}

\section{Computing the mean field equilibrium via primal-dual}
In this section we propose a decentralized algorithm to compute the mean field equilibrium based on Theorem \ref{connectiontheorem1}. 
When a mean field game (\ref{individualoptimization}) is given, instead of directly solving the equation systems (\ref{equationsystem1})-(\ref{equationsystem3}), we construct a social welfare maximization problem (\ref{eq:socialwelfaremax}) by finding a cost functional $\phi(\cdot)$ such that $\phi(z)'=NF(z)$. According to Theorem~\ref{connectiontheorem1}, the optimal solution to the social welfare maximization problem (\ref{eq:socialwelfaremax}) is the $\epsilon$-Nash equilibrium of the mean field game. Therefore,  the $\epsilon$-Nash equilibrium of the mean field game can be derived by solving (\ref{eq:socialwelfaremax}). To obtain a decentralized algorithm, we consider the dual form of (\ref{eq:socialwelfaremax}). The dual problem can be written as (\ref{dualprolem}), which can be efficiently solved by the following primal-dual algorithm:
 \begin{numcases}{}
   u_i\leftarrow \argmin_{u_i\in \bar{\mathcal{U}}_i} \left(V_i(x_i,u_i)+\lambda \cdot x_i \right)      \label{primaldual1}
   \\
   z\leftarrow \argmin_{z\in \mathcal{X}} \phi(z)-\lambda \cdot z      \label{primaldual2}
   \\
   \lambda\leftarrow \lambda+\beta N (\dfrac{1}{N}\sum_{i=1}^N \mathbb{E} f_i(u_i,\pi_i)-z)  \label{primaldual3} 
\end{numcases}
In the proposed algorithm, we introduce a virtual social planner to represent the cost minimization problem (\ref{primaldual2}). To implement the algorithm, an initial guess for the Lagrangian multiplier $\lambda$ is broadcast to all the agents and the social planner, who subsequently solve the primal problem. The primal problem can be decomposed over all the agents. Therefore, each agent can independently solve the stochastic programming problem (\ref{primaldual1}), while the virtual agent solves the deterministic optimization problem (\ref{primaldual2}) for a given $\lambda$. The solution of the primal problems are collected and used to update the dual $\lambda$ according to (\ref{primaldual3}). The updated dual variable is then broadcast to the agents again and this procedure is iterated until it converges to the socially optimal solution.
It can be verified that the proposed algorithm includes many existing ways to compute the mean field equilibrium special cases. For instance, the algorithms proposed in  \cite{ma2013decentralized} and \cite{rammatico2016decentralized} are equivalent to the primal-dual algorithm with a scaled stepsize in (\ref{primaldual3}), i.e., with a different value of $\beta$.  Using this insight, we can propose improved algorithms to compute the mean field equilibrium with better numerical properties, such as the alternating direction method of multipliers (ADMM). By adding proximal regularization in the Lagrangian, the ADMM method converges under more general conditions \cite{boyd2011distributed} , and converges faster than the proposed dual decomposition algorithm with appropriately selected step sizes.

\section{Case Studies}
In the case study, we consider the problem of coordinating the charging of a population of electric vehicles (EV) \cite{rammatico2016decentralized}. Each EV is modeled as a linear dynamic system, and the objective is to acquire a charge amount within a finite horizon while minimizing the charging cost. The charging cost of each EV is coupled through the electricity price, which is an affine function of the average of charging energy. This leads to the following game problem \cite{rammatico2016decentralized}:
\begin{align}
\label{PEVchargingame}
&\min_{u_i}  \eta||u_i-z||^2+2\gamma(z+c)^T u_i  \\
&\text{ subject to: } \nonumber\\
&\begin{cases}
  x_i(t+1)=x_i(t)+u_i(t) \nonumber\\
  0\leq x_i(t) \leq \bar{x}_i, 0\leq u_i(t) \leq \bar{u}_i, \sum_{t=1}^Tu_i(t)=\gamma_i, \nonumber
\end{cases}
\end{align}
where $2\gamma(z+c)^T$ denotes the electricity price, and the first term $\eta||u_i-z||^2$ penalizes the deviation from average control inputs. Note that the first term is mainly added for numerical stability \cite{rammatico2016decentralized}, we can let $\eta << \gamma$. In this example, $u_i(t)$ denotes the charging energy during the $t$th control period,  $x_i(t)\in \mathbb{R}$ is the state of charge (scaled by the capacity) of the EV battery, $\bar{x}_i$ is the battery capacity, and the electricity price is $2\gamma(z+c)$. The case study section focuses on computing the mean field equilibrium of (\ref{PEVchargingame}). We note that (\ref{PEVchargingame}) is slightly different from (\ref{individualoptimization}) in the sense that the coupling term depends on the average of control instead of the state. However, based on Remark \ref{remarkcontrol}, there is no essential difference between these two formulations, and our result applies universally. 

In \cite{rammatico2016decentralized}, the mean field equilibrium of (\ref{PEVchargingame}) is computed by a scaled version of the  algorithm (\ref{primaldual1})-(\ref{primaldual3}). If we let $\beta_k$ to denote the stepsize of (\ref{primaldual3}) during the $k$th iteration, then it is shown that the proposed algorithm converges to the mean field equilibrium of (\ref{PEVchargingame}) if $\lim_{k\rightarrow \infty}\beta_k=0$ and $\lim_{k\rightarrow \infty}\sum_{m=1}^k \beta_k=\infty$. Under this choice of stepsize, the algorithm is referred to as the Mann iteration, and we will use it as the benchmark. 

Theorem \ref{connectiontheorem1} indicates that the mean field game (\ref{PEVchargingame}) has the same solution as a convex social welfare optimization problem if $F(z)=\dfrac{1}{N} \phi'(z)$. Since $F(z)=2(\gamma-\eta)z$, then we have $\phi(z)=N(\gamma-\eta)z^Tz$. Therefore, we can compute the mean field equilibrium of (\ref{PEVchargingame}) by solving the following convex social welfare optimization problem:
\begin{align}
\label{simulation_social}
&\min_{(u_1,\ldots,u_N)} \sum_{i=1}^N \left(\eta||u_i||^2+2\gamma c^Tu_i\right)+N(\gamma-\eta)z^Tz  \\
&\text{ subject to: } \nonumber\\
&\begin{cases}
z=\dfrac{1}{N}\sum_{i=1}^N u_i \\
x_i^{t+1}=x_i(t)+u_i(t), \quad \forall i\in \mathcal{I},\quad \forall k\in \mathcal{K} \\
0\leq x_i(t) \leq \bar{x}_i, 0\leq u_i(t) \leq \bar{u}_i, \sum_{t=1}^Tu_i(t)=\gamma_i, \nonumber.
\end{cases}
\end{align}
where $\mathcal{I}=\{1,\ldots,N\}$ and $\mathcal{K}=\{1,\ldots,K\}$. To solve this problem, we propose to use alternating direction method of multipliers (ADMM). For algorithm details, please see \cite{boyd2011distributed}.

Now we compare the performance of the ADMM algorithm with the benchmark algorithm. In the simulation , we generate 100 sets of EV parameters over 36 control periods, and each period spans 5 minutes. The heterogeneous parameters, including the capacity of the EV batteries, the maximum charging rate of the batteries, and the other parameters in the objective function are all generated based on uniform distributions. We run both Mann iterations and ADMM for 200 iterations, and the simulation results are shown in Fig. \ref{fig1}-\ref{fig3}. In Fig. \ref{fig1}, we randomly select an EV and show its state of charge trajectory under the ADMM solution. In Fig. \ref{fig2} and Fig. \ref{fig3}, in order to compare the performance between ADMM and the Mann iteration, we show $||u_i||$ for a randomly selected EV and the average control input $||z||$ over each iteration. Based on the simulation results, the ADMM algorithm convergences to the optimal solution after about 50 iterations, while the Mann iteration converges after 100 iterations. Therefore, it is clear that ADMM converges faster than the benchmark algorithm. We point out that the algorithm converges only if both $z$ and $u_i$ converge. In Fig. \ref{fig3}, although the Mann iteration quickly approaches the optimal solution after a few iterations, it oscillates around the optimal solution and does not converge until after 100 iterations. This can be verified with Fig. \ref{fig2}. 

%In addition, we point out that the case study is essentially showing that ADMM has better convergence properties than the primal dual algorithm, which is already well accepted. However, it is the connection between the mean field game and the social welfare optimization that enables us to solve the problem via ADMM. Otherwise, the computation of mean field equilibrium and ADMM are two disconnected problems.   

\section{conclusion}
This paper studies the connections between a class of mean field games and the social welfare optimization problem. We derived  the mean field equations for the mean field game in functional spaces, and showed that the game equilibrium coincides with the solution to a convex social welfare optimization problem. Based on the relation between the mean field game and the social welfare optimization problem, efficient algorithms can be developed to compute the mean field equilibrium by solving the corresponding  convex social welfare optimization problem. Numerical simulations are presented to validate the proposed approach. Future work includes extending the proposed approach to the case of infinitely many agents and more general formulations where the mean field term depends on the probability distribution of the population state.

\bibliographystyle{unsrt}
\bibliography{TransactiveControl}
\end{document}